\theoremstyle{plain}
\newtheorem{thm}{Theorem}[section]
\newtheorem{lem}[thm]{Lemma}
\newtheorem{rem}[thm]{Remark}
\newtheorem{theoremIntro}{Theorem}
\title{An upper bound for the critical probability on the Cartesian product graph
		of a regular tree and a line }
\author{Kohei Yamamoto}
\date{Tohoku university}
\begin{document}
\maketitle

\begin{abstract}
	We study Bernoulli bond percolation on the Cartesian product graph of
	a regular tree and a line.
	We give an upper bound for the critical probability,
	which improves previous upper bound.
	We use a method  which is similar to Golton-Watson process.
	Our result leads that there exists a non-empty phase in which 
	there are infinitely many infinite clusters 
	when a degree of a tree is $4$.

\end{abstract}
\renewcommand{\thefootnote}{\fnsymbol{footnote}}
\footnotetext{2010 Mathematics Subject Classification: Primary 60K35; 
	Secondary 60J80, 82B43}
\footnotetext{Keywords: percolation, branching processes, random graph}

\section{Introduction}
\label{se:Intro}
	Let $G=(V,E,o)$ be a rooted, connected, locally finite, and infinite graph,
	where $V$ is the set of vertices, $E$ is the set of edges, 
	and $o$ is a special vertex called a root.
	In Bernoulli bond percolation, 
	each edge will be open with probability $p$,
	and closed with probability $1-p$ independently, 
	where $p \in [0,1]$ is a fixed parameter.
	Let $\Omega=2^E$ be the set of samples,
	where $\omega \subset E$ is the set of all open edges.
	Each $\omega \in \Omega$ is regarded as a subgraph of $G$ 
	consisting of all open edges.
	The connected components of $\omega$ are referred to as clusters.
	Let $p_c=p_c(G)$ be the critical probability for Bernoulli bond percolation 
	on $G$, that is,
	\[
	p_c= \inf \left\{ p\in[0,1] \mid 
	\text{there exists an infinite cluster almost surely} \right\},
	\]
	and let $p_u=p_u(G)$ be the uniqueness threshold for Bernoulli bond percolation 
	on $G$, that is,
	\[
	p_u= \inf \left\{ p\in[0,1] \mid 
	\text{there exists an unique infinite cluster almost surely} \right\}.
	\]
	One of the most popular graphs in the theory of percolation is 
	the Euclidean lattice $\mathbb{Z}^d$.
	In 1980 Kesten\cite{Kesten} proved that $p_c=1/2$ in the case of two dimensions.
	But in the case of three dimensions or more,
	as a numerical value, the critical probability is not quite clear. 
	Regarding the uniqueness threshold of the Euclidean lattice,
	in 1987 Aizenman, Kesten, and Newman\cite{Aizenman} proved that there exists
	at most one infinite cluster almost surely for all $d \ge 1$,
	that is, they showed that  $p_c=p_u$ for all $d \ge 1$. 
	The Cartesian product graph of a $d$-regular tree and a line $T_d \Box \mathbb{Z}$
	was presented as a first example of a graph with $p_c < p_u<1$
	by Grimmett and Newman\cite{Grimmett2} in 1990.
	They showed that $p_c<p_u$ holds when $d$ is sufficiently large.
	After this article had appeared,
	percolation on $T_d \Box \mathbb{Z}$ has been a popular topic.
	However, the critical probability of $T_d \Box \mathbb{Z}$ 
	is, as a value, also not quite clear.
	In recent years, Lyons and Peres\cite{Lyons1} 
	gave the following upper bound of $p_c$ and lower bound of $p_u$.
	\begin{thm}[\cite{Lyons1}]
	For all $d \ge 3$, we have 
	\begin{align}
	\label{ineq:p_c}
	p_c(T_d \Box \mathbb{Z}) &\le \frac{d-\sqrt{d^2-4}}{2},\\
	\label{ineq:p_u}
	p_u(T_d \Box \mathbb{Z}) &\ge \left(\sqrt{d-1}+1+\sqrt{2\sqrt{d-1}-1}\right)^{-1}.
	\end{align}
	\end{thm}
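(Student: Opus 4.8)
The plan is to prove \eqref{ineq:p_c} by embedding a supercritical branching process in the open cluster of the root, and to prove \eqref{ineq:p_u} by showing that the two‑point connectivity function decays to $0$ below the asserted value, which forbids a unique infinite cluster.

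For \eqref{ineq:p_c}, fix $p$ and write $q_\pm=\tfrac12\bigl(d\pm\sqrt{d^2-4}\bigr)$ for the two roots of $t^2-dt+1$; I aim to show percolation occurs whenever $q_-<p<q_+$. Root $T_d$ at $o$ and regard $T_d\Box\mathbb Z$ as the disjoint union of fibres $\{v\}\times\mathbb Z$. Explore the open cluster of $(o,0)$ fibre by fibre along the tree, but retain in each visited fibre only a single \emph{entry point} $(v,n_v)$ together with the points $(v,n_v),(v,n_v+1),\dots$ joined to it by open edges running to the right inside that fibre; if $R_v$ is the largest such displacement then $\mathbb P(R_v\ge j)=p^{\,j}$. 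For each of these $R_v+1$ retained points and each child $u$ of $v$, the crossing edge into $\{u\}\times\mathbb Z$ is open with probability $p$, independently; declare $u$'s fibre visited if at least one of its $R_v+1$ crossing edges is open, and give it the lowest open crossing as entry point. Because $T_d$ is a tree, every fibre has a unique parent, so the retained particles lie in distinct fibres and the edges deciding their offspring are disjoint: the construction is a genuine Galton--Watson process inside the cluster of $(o,0)$. A non‑root particle then has $\mathrm{Bin}\bigl(d-1,\,1-(1-p)^{R+1}\bigr)$ offspring with $R$ geometric, and a short computation gives
\[
\mathbb E[\,\#\mathrm{offspring}\,]=(d-1)\Bigl(1-\frac{(1-p)^2}{1-p+p^2}\Bigr)=\frac{(d-1)p}{1-p+p^2},
\]
which exceeds $1$ precisely when $p^2-dp+1<0$, i.e.\ for $q_-<p<q_+$ (the root, with $d$ children, is only more fertile). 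For such $p$ the process is supercritical and non‑degenerate, hence survives with positive probability, so $\mathbb P_p\bigl((o,0)\leftrightarrow\infty\bigr)>0$; since $\{\exists\text{ an infinite cluster}\}$ is a tail event its probability is $1$, giving $p\ge p_c$. Letting $p\downarrow q_-$ yields \eqref{ineq:p_c}.

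For \eqref{ineq:p_u}, I would start from the usual reduction. If $\theta(p):=\mathbb P_p\bigl((o,0)\leftrightarrow\infty\bigr)>0$ and there is almost surely a unique infinite cluster, then by the FKG inequality $\mathbb P_p(x\leftrightarrow y)\ge\mathbb P_p(x\in C_\infty)\,\mathbb P_p(y\in C_\infty)=\theta(p)^2>0$ for all $x,y$; and if $\theta(p)=0$ there is no infinite cluster. Hence it is enough to show that for every $p$ below the asserted bound, $\mathbb P_p(x\leftrightarrow y)\to0$ as $d(x,y)\to\infty$. For this I would use $\mathbb P_p(x\leftrightarrow y)\le\sum_\gamma p^{|\gamma|}$, the sum over self‑avoiding---hence non‑backtracking---paths $\gamma$ from $x$ to $y$, and bound the right‑hand side by $\ell^2$‑resolvent entries of the non‑backtracking adjacency operator $B$ of $G=T_d\Box\mathbb Z$ on oriented edges; these entries are finite and decay geometrically in $d(x,y)$ as soon as $p\,\|B\|<1$. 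Since $G$ is $(d+2)$‑regular and $A_G=A_{T_d}\otimes I+I\otimes A_{\mathbb Z}$ has norm $\|A_G\|=2\sqrt{d-1}+2$, the Bass--Ihara relation $z^{2}-\|A_G\|\,z+(d+1)=0$ identifies $\|B\|$ with the larger root $\sqrt{d-1}+1+\sqrt{2\sqrt{d-1}-1}$, so $p\,\|B\|<1$ is exactly the condition in \eqref{ineq:p_u}.

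The branching‑process half is routine once one commits to the one‑sided exploration that renders the offspring counts genuinely independent. The delicate point is the estimate behind \eqref{ineq:p_u}: a first moment over \emph{all} walks only gives $p_u\ge1/\|A_G\|$, while the connective constant of $G$ is of order $d-1$, so neither crude bound is good enough---one really needs the sharp exponential rate $\|B\|$ of the self‑avoiding two‑point function, i.e.\ the Bass--Ihara‑type identity for this non‑amenable graph (equivalently, a transfer‑operator recursion in which the edges of $T_d$ are cut one $\mathbb Z$‑tube at a time) together with the location of the top of the non‑backtracking spectrum in terms of $\|A_G\|$. That is the step I expect to require the most work.
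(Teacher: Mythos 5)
This theorem is quoted from Lyons--Peres and the paper itself contains no proof of it, so there is nothing internal to compare against; I am judging your argument against the cited source's standard proof, which it essentially reproduces.

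Your proof of \eqref{ineq:p_c} is correct and complete. The one-sided exploration (one entry point per fibre, only the rightward open segment retained, crossing edges examined only from retained points) does make the offspring counts i.i.d.\ because every edge is examined at most once, and the computation $\mathbb{E}[(1-p)^{R+1}]=(1-p)^2/(1-p+p^2)$, hence mean offspring $(d-1)p/(1-p+p^2)>1$ iff $p^2-dp+1<0$, is right; this is exactly the Grimmett--Newman/Lyons--Peres argument. (As an aside, this bound is the one the present paper improves: $1/d<(d-\sqrt{d^2-4})/2$, precisely because this exploration discards the leftward half of each fibre and all returns to a fibre.)

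For \eqref{ineq:p_u} the reduction via Harris/FKG and transitivity to ``$\tau_p(x,y)\to 0$ implies non-uniqueness'' is correct, as is the chain $\tau_p(x,y)\le\sum_{\gamma\ \mathrm{SAW}}p^{|\gamma|}\le\sum_n p^n N_n(x,y)$ with $N_n$ the non-backtracking path count. The one substantive gap is the assertion that these counts grow at rate $\|B\|=\sqrt{d-1}+1+\sqrt{2\sqrt{d-1}-1}$: you invoke an Ihara--Bass identity for the norm of the non-self-adjoint operator $B$ on an infinite graph, which is exactly the statement that needs proof, and a norm bound on $B$ is anyway more than you need. The clean way to close it is the elementary three-term recursion for non-backtracking walks on a $k$-regular graph ($k=d+2$ here), $N_0=I$, $N_1=A$, $N_2=A^2-kI$, $N_{n+1}=AN_n-(k-1)N_{n-1}$ for $n\ge2$, which gives $\sum_n N_n u^n=(1-u^2)\bigl(I-uA+(k-1)u^2I\bigr)^{-1}$ entrywise. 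The resolvent on the right is a bounded self-adjoint inverse whenever $1-u\lambda+(k-1)u^2\ne0$ for all $\lambda\in[-\|A\|,\|A\|]$, i.e.\ for $u<u^*=\bigl(\sqrt{d-1}+1+\sqrt{2\sqrt{d-1}-1}\bigr)^{-1}$ using $\|A\|=2\sqrt{d-1}+2$; since $N_n(x,y)=0$ for $n<d(x,y)$ and the resolvent entries are uniformly bounded by its operator norm, $\tau_p(x,y)\le (p/p')^{d(x,y)}\sum_n N_n(x,y)(p')^n\to0$ for $p<p'<u^*$. With that substitution your argument is the proof given in Lyons--Peres; your numerology ($q_\pm$, the discriminant $2\sqrt{d-1}-1$) checks out.
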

	From this result, $p_c <p_u$ holds for all $d \ge 5$.
	The main result of this paper is to give a new upper bound 
	which improves the inequality (\ref{ineq:p_c}).
	\begin{theoremIntro}
	\label{ThmA}
	Let $d \ge 3$. Then we have
	\[
	p_c(T_d \Box \mathbb{Z}) \le \frac{1}{d}.
	\]
	\end{theoremIntro}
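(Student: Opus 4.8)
The plan is to build, inside the open cluster of the vertex $(o,0)$, a supercritical Galton--Watson tree whenever $p>1/d$, where $o$ is a fixed vertex of $T_d$. Since $\{\text{there exists an infinite cluster}\}$ is a tail event (its occurrence is unaffected by changing finitely many edges) it has probability $0$ or $1$, and since it equals $\bigcup_{x}\{|C_x|=\infty\}$ while $T_d\Box\mathbb{Z}$ is vertex‑transitive, it has probability $1$ if and only if $P_p(|C_{(o,0)}|=\infty)>0$. Hence $p_c(T_d\Box\mathbb{Z})\le 1/d$ will follow once I show $P_p(|C_{(o,0)}|=\infty)>0$ for every $p>1/d$.

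Root $T_d$ at $o$, so that $o$ has $d$ children and every other vertex has $d-1$ children, and for $v\in T_d$ write $L_v=\{v\}\times\mathbb{Z}$. I would run this exploration. The root of the Galton--Watson tree is $(o,0)$. Given a discovered individual $(v,n_v)$ on the line $L_v$: first reveal the edges of $L_v$ and let $I_v\subseteq\mathbb{Z}$ be the (a.s.\ finite) interval with $\{v\}\times I_v$ the cluster of $(v,n_v)$ inside $L_v$ alone; then, for each child $w$ of $v$, reveal the rung edges $\{(v,m)-(w,m):m\in I_v\}$, and if at least one is open attach to $(v,n_v)$ the new individual $(w,m_w)$ with $m_w=\min\{m\in I_v:(v,m)-(w,m)\text{ open}\}$. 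The structural point is that each line $L_v$ hosts at most one individual and is examined only when that individual is processed, while the rungs between $L_v$ and a child line $L_w$ are examined only when $(v,n_v)$ is processed; hence the edges consulted by distinct individuals are pairwise disjoint, so this genuinely is a Galton--Watson process. Moreover, because the line edges of $L_v$ are fresh, the interval $I_v$ always has the law of $1+G^-+G^+$ with $G^-,G^+$ i.i.d.\ and $P(G=k)=(1-p)p^k$; so all individuals except the root share one offspring distribution.

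Two things remain. First, every individual is a vertex of $C_{(o,0)}$: the $L_v$‑path joining $n_v$ to $m_w$ inside $I_v$ followed by the open rung $(v,m_w)-(w,m_w)$ connects each child to its parent, and since each line carries at most one individual, distinct individuals are distinct vertices; thus survival of the process forces $|C_{(o,0)}|=\infty$, and the subtree below a child of $(o,0)$ is an ordinary Galton--Watson process, so it suffices that this latter process be supercritical. Second, its mean offspring is
\[
m(p)=(d-1)\bigl(1-E[(1-p)^{|I_v|}]\bigr)=(d-1)\left(1-\frac{(1-p)^3}{(1-p+p^2)^2}\right),
\]
using $E[(1-p)^{G}]=(1-p)/(1-p+p^2)$ and independence of $G^-,G^+$. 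A short computation gives $m(1/d)=(d-1)(d^3-d+1)/(d^2-d+1)^2$, and $m(1/d)>1$ is equivalent to $d^3-4d^2+4d-2>0$, which holds for all $d\ge 3$ (it equals $1$ at $d=3$ and is plainly positive for $d\ge 4$); since $p\mapsto(1-p)^3/(1-p+p^2)^2$ is decreasing on $(0,1)$ (verify via the logarithmic derivative), $m$ is increasing, so $m(p)\ge m(1/d)>1$ for every $p\in[1/d,1)$. A supercritical Galton--Watson process with finite mean survives with positive probability, which gives $P_p(|C_{(o,0)}|=\infty)>0$ and hence Theorem~\ref{ThmA}.

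The step I expect to demand the most care is the bookkeeping that keeps the exploration an honest Galton--Watson process: one must commit to \emph{one individual per line}, spreading inside each line in \emph{both} directions but retaining only the \emph{first} rung to each child, so that the edge sets queried by different individuals are genuinely disjoint and the offspring variables are i.i.d. A variant that keeps every rung destroys independence, and one that spreads in only one direction along a line weakens the constant below what is needed; the precise bookkeeping above is exactly what makes the threshold come out at $1/d$.
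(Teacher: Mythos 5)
Your proof is correct, and it reaches the bound $1/d$ by a genuinely more elementary route than the paper. The paper also reduces everything to a Galton--Watson comparison (Lemma \ref{lem:PGF}), but then spends Sections 3--5 bounding $\limsup_n \mathbb{P}_p^{H_n}(o\leftrightarrow\pi^{-1}(v_n))^{1/n}$ from below: it tracks the \emph{entire} set $A_i$ of vertices of $\pi^{-1}(i)$ reached at each level, encodes this in the multi-sums $\alpha_p(n)$ and $\beta_p(n)$, passes to the generating functions $F_p$ and $H_p$, analytically continues $H_p$ beyond $|z|=1$, locates a pole of $F_p$ via the condition $H_p(x_0)=-1$, and finally verifies the resulting criterion at $x=(1-p)/p$ using $\Phi_p(l)\ge e^{-(p/(1-p))^2}$ for $1/d<p<0.34$. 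Your exploration instead discards all but one entry point per line, which collapses the whole analysis to a single-generation computation: your offspring mean $m(p)=(d-1)\bigl(1-(1-p)^3/(1-p+p^2)^2\bigr)$ is exactly $\mathbb{E}_p[X_1]$ as computed in \eqref{eq:H_1}, so in the paper's language your argument is Lemma \ref{lem:PGF} applied with $n=1$ --- a shortcut the paper sets up but does not take. Your bookkeeping (one individual per line, bidirectional spread within the line, only the first open rung to each child line) is exactly what is needed for the revealed edge sets to be disjoint, and the arithmetic checks out: $m(1/d)>1$ is equivalent to $d^3-4d^2+4d-2>0$, which holds for all $d\ge3$ (with value $1$ at $d=3$, so the margin is thin there). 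What the paper's heavier machinery buys is the sharper multi-step estimate needed for Theorem B: at $d=4$ and $p=0.225$ one computes $m(0.225)\approx0.95<1$, so your one-individual-per-line process is subcritical there, whereas the paper's method, which retains all $l_i$ reached vertices on each line, still certifies percolation.
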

	In the case of $d=4$, 
	we further improve this upper bound.
	\begin{theoremIntro}
	Let $d=4$. Then we have
	\begin{equation}
	\label{ineq:d=4}
	p_c< 0.225.
	\end{equation}
	\end{theoremIntro}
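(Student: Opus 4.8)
The plan is to keep using the Galton--Watson--type exploration that underlies Theorem~A, but to record strictly more information about \emph{how} the open cluster reaches each new copy of the line, and then to push the resulting supercriticality condition down to $p=0.225$ by a finite (in practice machine‑assisted) computation. \textbf{Step 1 (the exploration).} Root $T_4$ at $o$ and write $\mathbb{Z}_x=\{x\}\times\mathbb{Z}$ for the copy of the line over a tree vertex $x$. Explore the open cluster of $(o,0)$ one line at a time, in breadth‑first order with respect to the tree distance from $o$. When a line $\mathbb{Z}_x$ is reached for the first time — at a set of levels inherited from its parent line — reveal every not‑yet‑inspected line edge of $\mathbb{Z}_x$ together with every tree edge joining $\mathbb{Z}_x$ to one of its $d-1=3$ child lines. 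This determines the subset $C_x\subset\mathbb{Z}$ of $\mathbb{Z}_x$ connected to $(o,0)$ and, for each child $y$, the (possibly empty) set of levels at which $\mathbb{Z}_y$ becomes reached. Since each edge of $T_4\Box\mathbb{Z}$ is inspected at most once, the collection of reached lines is an honest branching process; if it survives with positive probability then an infinite cluster exists with positive probability, hence — by the Kolmogorov $0$--$1$ law — almost surely, so $p_c\le p$.

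\textbf{Step 2 (a richer type).} For Theorem~A it suffices to record one level at which each child line is reached, so the offspring count stochastically dominates $\mathrm{Bin}\big(d-1,\,1-(1-p)^{|C_x|}\big)$ with $|C_x|$ a sum of two independent geometric variables; optimizing over $p$ already yields $p_c\le 1/d$. To do better I would take the \emph{entire} reached set $C_x$ (up to translation) as the type of an individual. A child line can be reached simultaneously at several levels, which enlarges its own set $C_y$ and therefore its expected number of descendants — an effect invisible to one‑point bookkeeping. Precisely: given a parent of type $C$, each child $y$ is handed the random subset $S_y\subseteq C$ obtained by keeping each level of $C$ independently with probability $p$, and then $C_y$ is the cluster of $S_y$ inside the fresh line $\mathbb{Z}_y$, i.e.\ $S_y$ with each of its maximal runs inflated at both ends by independent one‑sided geometric amounts. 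This is a multitype branching process whose type space is the set of finite subsets of $\mathbb{Z}$ modulo translation (in general a finite union of intervals), with the explicit kernel just described. If this is not enough, one enlarges each gadget to a line together with all of its child lines and lets a gadget's offspring be its grandchild lines; paths that leave $\mathbb{Z}_x$ into a child and return then also contribute, at the price of a heavier kernel.

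\textbf{Step 3 (supercriticality at $p=0.225$).} Survival is equivalent to the mean operator $M=M(p)$ of this multitype process having spectral radius $\rho(M)>1$. I would bound $\rho(M)$ from below: the kernel is monotone in the natural sense — enlarging $C$ only stochastically enlarges the offspring sets — so restricting the type space to sets of total size at most $N$ and killing every larger offspring produces a finite nonnegative matrix $M_N(p)$ that is a principal submatrix of $M(p)$, whence $\rho\big(M_N(p)\big)\le\rho\big(M(p)\big)$. It then suffices to exhibit one $N$ with $\rho\big(M_N(0.225)\big)>1$, a finite Perron--Frobenius eigenvalue computation. Alternatively one can try to produce a positive test function $h$ on types with $\sum_{\text{children }y}\mathbb{E}\big[h(C_y)\mid C\big]\ge(1+\varepsilon)\,h(C)$ for all $C$ and some $\varepsilon>0$ — for instance $h(C)=\alpha^{|C|}$, or a small perturbation thereof, with $\alpha=\alpha(p)$ tuned — which collapses the argument to a one‑variable inequality.

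\textbf{Main obstacle.} The hard part is Step~3 together with the fact that the margin is thin: the two‑sided one‑point bound of Theorem~A's type is already below $1$ at $p=0.225$ (it equals $3\big(1-(1-p)^3/(1-p+p^2)^2\big)\approx0.95$ there), so the multi‑level — and, if needed, return‑path — enhancement must be exploited essentially in full, and the infinite‑type operator must be controlled sharply, either via a well‑chosen eigenfunction or via a truncation whose Perron eigenvalue is \emph{provably} a lower bound for $\rho(M)$. Setting up the kernel so that this monotonicity is clean, and choosing the gadget and the truncation level $N$ large enough that $\rho\big(M_N(0.225)\big)>1$ while the matrix stays of manageable size, is where the real work lies.
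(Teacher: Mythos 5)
Your Steps 1 and 2 are essentially the paper's own strategy: Sections 2--3 perform exactly this line-by-line exploration, and the ``richer type'' you propose is what the quantity $\alpha_p(n)$ encodes, except that the paper deliberately reduces the type to the \emph{cardinality} $l_i$ of the reached set, using a worst-case configuration (all reached points isolated, extensions only at the two extreme ends) to get a configuration-independent lower bound on the transition kernel. Your diagnosis of why this is necessary is also correct: the one-point offspring mean $3\bigl(1-(1-p)^3/(1-p+p^2)^2\bigr)$ is indeed about $0.95$ at $p=0.225$, so the multi-level bookkeeping is not optional.

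The genuine gap is in Step 3: the statement $p_c<0.225$ is never actually derived. You reduce it to exhibiting a finite truncation $N$ with $\rho\bigl(M_N(0.225)\bigr)>1$ (or a test function $h$ with a uniform expansion factor $1+\varepsilon$), but neither the matrix, nor the eigenvalue, nor the test function is produced, and you yourself flag this as ``where the real work lies.'' Since the margin is thin, there is no a priori guarantee that a truncation of manageable size crosses $1$; this must be demonstrated, not asserted. The paper closes exactly this gap analytically rather than computationally: it introduces the generating function $F_p(z)=\sum_l \beta_p(l)z^l$ for the (worst-case) crossing probabilities, derives a functional equation for the auxiliary series $H_p(z)=\sum_l \Phi_p(l)^2 z^l$ giving its analytic continuation to $|z|<1/p$, shows (Lemma 4.1) that a solution of $H_p(x_0)=-1$ with $x_0\in(1,1/p)$ is a pole of $F_p$, and thereby obtains the explicit, hand-checkable criterion $h_p(x)\ge(1-p)/p$ with $x<d-1$ (Lemma 4.2). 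Theorem B is then the one-line verification of this inequality at $p=0.225$, $x=2.999$ using the elementary bound $\Phi_p(l)^2\ge 1-2\bigl(p/(1-p)\bigr)^2$. Two smaller cautions about your Step 3 as written: survival of an infinite-type branching process is not in general \emph{equivalent} to $\rho(M)>1$ (only the direction ``supercritical finite truncation $\Rightarrow$ survival'' is safe, which is the one you need); and when you invoke Perron--Frobenius on $M_N$ you should restrict to a communicating class of types to ensure the truncated process genuinely survives.
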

	Then using the inequality (\ref{ineq:p_u}) and (\ref{ineq:d=4}),
	we have $p_c<0.225<0.232<p_u$ when $d=4$.
	\begin{rem}
	A preprint of this paper first appeared in June 2017.
	At that time, it was not known whether $p_c < p_u$ holds when $d=3$.
	In November 2017, Hutchcroft\cite{Hutchcroft} showed that 
	$p_c < p_u$ holds for all $d \ge 3$.
	The method of Hutchcroft is different from ours.
	\end{rem}

\section{Probability generating function}
\label{se:PGF}
	The critical probability of $T_d$ is found by only consider Galton-Watson process.
	In this process, we can know whether the tree is infinite or not 
	by only consider the first step.
	Lyons and Peres used the natural projection from $T_d \Box \mathbb{Z}$
	to $T_d$, and focus on the first step.
	They gave an upper bound by using the method like Galton-Watson process.
	We also use the projection and essentially the same method.
	But our strategy is to consider each step not just the first step 
	to get a better estimate.

	In general, if $H$ is a subgraph of $G$ containing $o$,
	then we have $p_c(G) \le p_c(H)$.
	There exists a $(d-1)$-ary tree as a subgraph of a $d$-regular tree,
	where $(d-1)$-ary tree is a tree such that $\deg v =d$ except the root
	and $\deg o=d-1$.
	Then we can find an upper bound of $p_c(T_d \Box \mathbb{Z})$
	by estimating the critical probability of the Cartesian product graph of 
	a $(d-1)$-ary tree and a line.
	Therefore, we may assume that $T_d$ is a $(d-1)$-ary tree in the following.
	We denote the probability measure associated with this process by  
	$\mathbb{P}_p$
	or $\mathbb{P}_p^G$
	and the expectation operator by $\mathbb{E}_p$ or $\mathbb{E}_p^G$.
	The definition of $p_c$ can be rewritten using $\mathbb{P}_p$.
	Let $\mathcal{C}$ be a cluster containig $o$
	and $|\mathcal{C}|$ be a order of the vertex set of $\mathcal{C}$. 
	Then we can rewrite
	\[
	p_c= \inf \left\{ p\in[0,1] \mid 
	\mathbb{P}_p(|\mathcal{C}|=\infty)>0 \right\}.
	\]
	In Bernoulli percolation on $T_d$,
	we can show that $p_c=1/(d-1)$ by Galton-Watson process.
	In Galton-Watson process, let $X_n$ be the number of vertices such that 
	it has distance $n$ from the root.
	It is well known that following equation holds.
\begin{equation}
	\label{equ:GW}
	\mathbb{E}[X_n]=\mathbb{E}[X_1]^n.
\end{equation}
	Therefore if $\mathbb{E}[X_1] >1$, then the probability that the tree is infinite, 
	is positive.
	Let $B_n$ be a subgraph of $T_d$ defined by an $n$-ball centered at the root.
	Because we can decompose $T_d$ into several pieces each of which is
	isomorphic to $B_n$,
	the equation \eqref{equ:GW} holds.
	The graph $T_d \Box \mathbb{Z}$ has a similar structure.
	To explain it,
	let $H_n$ be a subgraph of $T_d \Box \mathbb{Z}$ defined by
\begin{align*}
	V(H_n)&=\left\{ (v, k) \mid d_{T_d}(v,0_{T_d}) \le n, k\in \mathbb{Z} \right\},\\
	E(H_n)&=\left\{ \{ (v,k),(u,k)\}  \mid 
			(v,k),(u,k) \in V(H_n), \{v,u\} \in E(T_d), k\in \mathbb{Z} \right\} \\
			& \quad \cup
			\left\{ \{ (v,k), (v,k+1)\} \mid d_{T_d}(v,0_{T_d}) \le n-1, k \in \mathbb{Z} \right\}, 
\end{align*}
	where $d_{T_d}$ is the graph metric on $T_d$.
	Then $T_d \Box \mathbb{Z}$ is clearly decomposed into infinitely many 
	copies of $H_n$.
	Let $\pi$ be the natural projection from $T_d \Box \mathbb{Z}$ to $T_d$.
	In percolation on $H_n$, 
	we define a random variable $X_n(\omega)$ by the following formula.
\begin{equation}
	X_n(\omega)= \# \left\{ v \in T_d \mid  d_{T_d}(v,0_{T_d}) = n, 
	o \leftrightarrow \pi^{-1}(v) \text{ on } H_n\right\},
\end{equation} 
	where the notation $A \leftrightarrow B$ means that 
	there exists an open path between $A$ and $B$.
	Similar to Galton-Watson process, we have the following lemma.
	
\begin{lem}
	\label{lem:PGF}
	For any $p$, 
	if there exists $n$ such that $\mathbb{E}_p[X_n]>1$, 
	then $\mathbb{P}_p( |\mathcal{C}|=\infty)>0$.
\end{lem}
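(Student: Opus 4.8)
The plan is to reproduce, at the level of the blocks $H_n$, the classical argument that gives $p_c(T_d)=1/(d-1)$ out of $\mathbb{E}_p[X_1]$: I will build a Galton-Watson process, dominated by the cluster $\mathcal{C}$ of $o$, whose offspring law is the law of $X_n$. The first step is to record the right decomposition of $G:=T_d\Box\mathbb{Z}$. For a vertex $v$ of the $(d-1)$-ary tree $T_d$ with $d_{T_d}(v,0_{T_d})$ a non-negative multiple of $n$, let $H_n^{(v)}$ be the copy of $H_n$ ``hanging below $v$'', i.e.\ the subgraph spanned by the vertices $(u,k)$ with $u$ a descendant of $v$ at tree-distance at most $n$, together with the horizontal edges among them and the vertical edges over those $u$ with $d_{T_d}(u,v)\le n-1$. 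A short inspection of the two types of edges in the definition of $E(H_n)$ shows that every edge of $G$ lies in exactly one such $H_n^{(v)}$; in particular the vertical edges over the leaves of a block are the vertical edges incident to the root of the block just below it and are counted there, while the horizontal edges incident to a leaf join it to the block below. Hence the family $\{H_n^{(v)}\}$, as $v$ ranges over the vertices of $T_d$ at distances $0,n,2n,\dots$ from $0_{T_d}$, consists of pairwise \emph{edge-disjoint} subgraphs covering all of $G$, and each $H_n^{(v)}$ is isomorphic, as a rooted graph via a map sending $(v,0)$ to $o$, to $H_n$.

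The second step is an exploration of $\mathcal{C}$ that respects this decomposition. Set $R_0=\{o\}$. Given a finite set $R_k$ containing exactly one vertex $(v,J_v)$ of the line $\pi^{-1}(v)$ for each $v$ at distance $kn$ from $0_{T_d}$ already known to satisfy $o\leftrightarrow (v,J_v)$, I inspect, for each such $(v,J_v)$, only the edges of $H_n^{(v)}$, determine the cluster of $(v,J_v)$ inside $H_n^{(v)}$, and for every leaf $w$ of $H_n^{(v)}$ met by that cluster I select one vertex $(w,J_w)$ of $\pi^{-1}(w)$ lying in it; $R_{k+1}$ is the set of all these $(w,J_w)$. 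Two observations finish the comparison. First, every vertex ever placed in $\bigcup_k R_k$ is joined to $o$ by an open path, so $|\mathcal{C}|\ge |\bigcup_k R_k|$. Second, since the blocks $H_n^{(v)}$ are pairwise edge-disjoint, the edge sets examined at step $k+1$ for different $(v,J_v)\in R_k$ are mutually disjoint and disjoint from everything examined before; hence, conditionally on $R_0,\dots,R_k$, the numbers of leaves reached from the various $(v,J_v)\in R_k$ are independent, and translating the $\mathbb{Z}$-coordinate by $-J_v$ and using the rooted isomorphism $H_n^{(v)}\cong H_n$ identifies each of these numbers in law with $X_n$. Therefore $(|R_k|)_{k\ge 0}$ is a Galton-Watson process with $|R_0|=1$ and offspring distribution equal to the law of $X_n$.

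For the last step, note that $0\le X_n\le (d-1)^n<\infty$, so the hypothesis $\mathbb{E}_p[X_n]>1$ makes this Galton-Watson process supercritical; it therefore survives with positive probability, and on the survival event $|R_k|\ge 1$ for every $k$, whence $\bigcup_k R_k$ is infinite and $|\mathcal{C}|=\infty$. This yields $\mathbb{P}_p(|\mathcal{C}|=\infty)\ge \mathbb{P}(\text{survival})>0$.

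I expect the only genuinely delicate point to be the verification in the second step that the exploration can be organised so that the offspring counts are independent copies of $X_n$, independent of the past; this rests entirely on the edge-disjointness of the blocks $H_n^{(v)}$ and on $\mathbb{Z}$-translation invariance. Once that is in place, the Galton-Watson comparison and the survival criterion are completely standard.
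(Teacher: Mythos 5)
Your proposal is correct and follows essentially the same route as the paper: the same decomposition of $T_d\Box\mathbb{Z}$ into edge-disjoint copies of $H_n$, and the same device of selecting one representative vertex of each fibre $\pi^{-1}(w)$ as a new root, so that $\mathcal{C}$ dominates a Galton--Watson process with offspring law equal to that of $X_n$. The only difference is presentational: the paper encodes this domination analytically as the generating-function inequality $f_{nm}(s)\le f_n^{(m)}(s)$ and invokes the extinction criterion, while you realise it as an explicit block-exploration coupling and invoke the survival criterion.
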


\begin{proof}
	Let $s \in [0,1]$ be a parameter. The probability generating function is defined by
	\[
	f_n(s)=\mathbb{E}_p[s^{X_n}]=\sum_{k=0}^{(d-1)^n} \mathbb{P}_p(X_n=k)s^k.
	\]
	In Galton-Watson process, the equation $f_{nm}(s)=f_n(s)^{(m)}$ holds,
	where $f_n^{(m)}$ is the $m$ times composite function of $f_n$.
	Similarly, we will show the following inequality.
	\begin{equation}
		\label{ine:PGF}
		f_{nm}(s) \le f_n(s)^{(m)}.
	\end{equation}
	We compute $f_{nm}(s)$ by rewriting probabilities involving $X_{nm}$ 
	in terms of $X_{n(m-1)}$ as follows.
	\begin{align*}
		f_{nm}(s)
		&= \sum_{k \ge 0} \mathbb{P}_p(X_{nm}=k)s^k\\
		&= \mathbb{P}_p(X_{nm} \ge 0) 
			-\sum_{k \ge1} \mathbb{P}_p(X_{nm} \ge k)s^{k-1}(1-s)\\
		&=1-\sum_{k \ge1} \sum_{l \ge 1} \mathbb{P}_p(X_{nm} \ge k|X_{n(m-1)}=l)
			\cdot \mathbb{P}_p(X_{n(m-1)}=l)s^{k-1}(1-s).
	\end{align*}
	If $(X_{n(m-1)}=l)$ occurs,
	there exists $l$ vertices $v_1, \ldots ,v_l$ on $T_d$ 
	such that $d_{T_d} (v_i, o_{T_d})=n(m-1)$,
	and there exists an open path from $o$ to $\pi^{-1}(v_i)$ on $H_{n(m-1)}$.
	Therefore, for each $i$, there exists at least one vertex on $\pi^{-1}(v_i)$ which is 
	connected with the root by an open path.
	We regard these vertices as new roots.
	That is, we consider percolation on $H_{n(m-1)}$ first, 
	then we consider percolation on $H_{nm} \setminus H_{n(m-1)}$ next.
	Since $H_{nm} \setminus H_{n(m-1)}$ is a union of $(d-1)^{n(m-1)}$ pieces 
	each of which is isomorphic to $H_n$, 
	we can estimate a lower bound of $\mathbb{P}_p(X_{nm} \ge k|X_{n(m-1)}=l)$.
	\[
	\mathbb{P}_p(X_{nm} \ge k|X_{n(m-1)}=l) \ge 
	\sum_{\substack{j_1, \ldots , j_l \ge 0 \\j_1+\cdots+j_l \ge k}} 
	\left( \prod_{u=1}^l \mathbb{P}_p(X_n=j_u) \right).
	\]
	Therefore, we have
	\begin{align*}
		f_{nm}(s)
		& \le 1- \sum_{l \ge 1}  \sum_{j_1, \ldots , j_l \ge 0} 
		\left(\prod_{u=1}^l \mathbb{P}_p(X_n=j_u) \right) \cdot 
		\mathbb{P}_p(X_{n(m-1)}=l) \sum_{k=1}^{j_1+\cdots+j_l} s^{k-1}(1-s)\\
		&= 1- \sum_{l \ge 1}  \mathbb{P}(X_{n(m-1)}=l) 
		 \sum_{j_1, \ldots , j_l \ge 0} 
		\left(\prod_{u=1}^l \mathbb{P}_p(X_n=j_u) \right)\\
		&\qquad +\sum_{l \ge 1}  \mathbb{P}(X_{n(m-1)}=l) 
		\sum_{j_1, \ldots , j_l \ge 0} 
		\left(\prod_{u=1}^l \mathbb{P}_p(X_n=j_u) s^{j_u} \right)\\
		&= \mathbb{P}_p(X_{n(m-1)}=0) + \sum_{l \ge 1}  \mathbb{P}(X_{n(m-1)}=l) 
		\left( \mathbb{E}_p[s^{X_n}] \right)^l \\
		&= \mathbb{E}_p[f_n(s)^{X_{n(m-1)}}] \le \cdots \le f_n^{(m)}(s),
	\end{align*}
	which completes the proof of the inequality (\ref{ine:PGF}).
	By the definition of the probability generating function, 
	if $\mathbb{E}[X_n]>1$, 
	then we have $\displaystyle \lim_{m \to \infty} f_n^{(m)} (0) <1$.
	Using the inequality (\ref{ine:PGF}), 
	we have $\displaystyle \lim_{n \to \infty} f_n (0) <1$.
	Hence we have $\mathbb{P}_p( |\mathcal{C}|=\infty)>0$.
	\end{proof}
	Let $v_n$ be a vertex in $T_d$ such that $d_{T_d} (v_n, o_{T_d})=n$.
	We have
\begin{equation}
	\mathbb{E}[X_n]=(d-1)^n \mathbb{P}_p^{H_n} (o \leftrightarrow \pi^{-1}(v_n)).
\end{equation}
	Using this equation and Lemma \ref{lem:PGF}, 
	if $\displaystyle \limsup_{n \to \infty} 
	\mathbb{P}_p^{H_n} (o \leftrightarrow \pi^{-1}(v_n))^{1/n} > 1/(d-1)$,
	then we have $\mathbb{P}_p( |\mathcal{C}|=\infty)>0$.
	It is difficult to estimate $\displaystyle \limsup_{n \to \infty} 
	\mathbb{P}_p^{H_n} (o \leftrightarrow \pi^{-1}(v_n))^{1/n}$ exactly.
	Therefore, we take a subgraph $L_{n-1} \Box \mathbb{Z} \subset H_n$,
	where $L_n$ is a segment of length $n$ in $T_d$ emanating from $o_{T_d}$.
	Then we define a function $\alpha:[0,1] \to [0,1]$ by
	\begin{equation}
	\label{def:alpha}
	\alpha(p)=\limsup_{n \to \infty} 
	\mathbb{P}_p^{L_n \Box \mathbb{Z}} (o \leftrightarrow \pi^{-1}(v_n))^{1/n}.
	\end{equation} 
	Because $L_{n-1} \Box \mathbb{Z} \subset H_n$,
	if $\alpha(p) > 1/(d-1)$, then $\displaystyle \limsup_{n \to \infty} 
	\mathbb{P}_p^{H_n} (o \leftrightarrow \pi^{-1}(v_n))^{1/n}  \ge \alpha(p) > 1/(d-1)$.
	Hence, we have the following lemma.

\begin{lem}
\label{lem:limit}
	Let $p_0= \displaystyle \inf \left\{p\in [0,1] \mid \alpha(p)>1/(d-1) \right\}$,
	then we have $p_c(T_d \Box \mathbb{Z}) \le p_0$.
\end{lem}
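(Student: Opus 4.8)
The plan is to reduce the claim to the two facts already assembled in this section, namely Lemma \ref{lem:PGF} and the monotonicity of percolation under taking subgraphs, and then to check that the $\limsup$ in the definition \eqref{def:alpha} of $\alpha$ behaves well under the restriction $L_{n-1}\Box\mathbb{Z}\subset H_n$. First I would fix $p>p_0$; by definition of the infimum $p_0$ there exists $p'\in(p_0,p]$ with $\alpha(p')>1/(d-1)$, and by Harris' inequality (or simply because opening more edges can only help connection) $\mathbb{P}_{p}^{G}(o\leftrightarrow\pi^{-1}(v_n))\ge \mathbb{P}_{p'}^{G}(o\leftrightarrow\pi^{-1}(v_n))$ for every relevant graph $G$, so in fact it suffices to treat $p$ itself with $\alpha(p)>1/(d-1)$, i.e.\ to show $p\ge p_c(T_d\Box\mathbb{Z})$ whenever $\alpha(p)>1/(d-1)$.

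Next I would pass from the segment to the ball. Since $L_{n-1}$ is a subsegment of $T_d$ emanating from $o_{T_d}$ and $v_{n-1}$ can be taken to be its far endpoint, the inclusion $L_{n-1}\Box\mathbb{Z}\subset H_n$ (both as subgraphs of $T_d\Box\mathbb{Z}$) together with monotonicity in the graph gives
\[
\mathbb{P}_p^{H_n}(o\leftrightarrow\pi^{-1}(v_{n-1}))\ \ge\ \mathbb{P}_p^{L_{n-1}\Box\mathbb{Z}}(o\leftrightarrow\pi^{-1}(v_{n-1})).
\]
Taking $(n-1)$-th roots and $\limsup_{n\to\infty}$, and using that shifting the index $n\mapsto n-1$ does not change a $\limsup$, I get
\[
\limsup_{n\to\infty}\mathbb{P}_p^{H_n}(o\leftrightarrow\pi^{-1}(v_n))^{1/n}\ \ge\ \alpha(p)\ >\ \frac{1}{d-1}.
\]
Here one must be slightly careful that $v_n$ in the $H_n$-expression and $v_{n-1}$ in the segment expression are compatible; because $H_n$ contains the full ball of radius $n$ while the segment only reaches radius $n-1$, I would phrase the chain so that the ball version is evaluated at radius $n-1$ as well, or equivalently note $\mathbb{P}_p^{H_n}(o\leftrightarrow\pi^{-1}(v_n))\ge \mathbb{P}_p^{H_{n}}(o\leftrightarrow\pi^{-1}(v_{n-1}))\ge\mathbb{P}_p^{L_{n-1}\Box\mathbb{Z}}(o\leftrightarrow\pi^{-1}(v_{n-1}))$, the first step because any open path to level $n$ passes through level $n-1$.

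Finally I would invoke the consequence of Lemma \ref{lem:PGF} recorded in the text: if $\limsup_{n\to\infty}\mathbb{P}_p^{H_n}(o\leftrightarrow\pi^{-1}(v_n))^{1/n}>1/(d-1)$ then $\mathbb{E}_p[X_n]=(d-1)^n\mathbb{P}_p^{H_n}(o\leftrightarrow\pi^{-1}(v_n))>1$ for some $n$, hence $\mathbb{P}_p(|\mathcal{C}|=\infty)>0$, so $p\ge p_c(T_d\Box\mathbb{Z})$. Since this holds for every $p>p_0$, taking the infimum over such $p$ yields $p_c(T_d\Box\mathbb{Z})\le p_0$, as claimed. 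I expect the only genuinely delicate point to be the index bookkeeping between the radius-$n$ ball $H_n$, the radius-$(n-1)$ segment, and the vertices $v_n$; everything else is monotonicity plus the already-proved Lemma \ref{lem:PGF}, so there is no substantial obstacle.
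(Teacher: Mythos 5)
Your proposal is correct and follows exactly the route the paper takes: the paper states this lemma without a separate proof because it is the immediate combination of Lemma \ref{lem:PGF}, the identity $\mathbb{E}_p[X_n]=(d-1)^n\mathbb{P}_p^{H_n}(o\leftrightarrow\pi^{-1}(v_n))$, and the subgraph inclusion $L_{n-1}\Box\mathbb{Z}\subset H_n$, which is precisely your chain of inequalities. Your extra care with the index shift $n\mapsto n-1$ in the $\limsup$ and with monotonicity in $p$ near the infimum only makes explicit what the paper leaves implicit.
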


\section{Lower bound of $\alpha(p)$}
\label{se:LB}
	We have defined a function $\alpha(p)$ in (\ref{def:alpha}) which is useful to give,
	as in Lemma \ref{lem:limit}, an upper bound for $p_c(T_d \Box \mathbb{Z})$.
	However, it is still difficult to handle.
	Thus, we shall prepare another lower bound which depends on both $p$ and $n$.
	Let $L_{\infty}=\mathbb{Z}_{\ge 0}$ be a ray.
	Let $H$ be a subgraph of $L_{\infty} \Box \mathbb{Z}$ defined by
\begin{align*}
	V(H)&=\left\{ (n, k) \mid n\in \{0,1\}, k\in \mathbb{Z} \right\},\\
	E(H)&= \left\{ \{ (0,k), (0,k+1)\} \mid  k \in \mathbb{Z} \right\}\\
			& \quad \cup
			\left\{ \{ (0,k),(1,k)\} \mid  k\in \mathbb{Z} \right\}.
\end{align*}
	$L_{\infty} \Box \mathbb{Z}$ is decomposed into infinitely many pieces
	each of which is isomorphic to $H$.
	We denote this decomposition as $L_{\infty} \Box \mathbb{Z}=\cup_{i=1}^{\infty}H_i$
	where $H_i$ is a copy of $H$.
	We set $G_n= \sqcup_{i=1}^n H_i$.
	Then we have
	\[
	\alpha(p)=\limsup_{n \to \infty} 
	\mathbb{P}_p^{G_n} (o \leftrightarrow \pi^{-1}(n))^{1/n}.
	\]
	We will make a lower bound of $\alpha(p)$.
	We define the sequence of numbers $\{ \alpha_p(n) \}$ by
	\[
	\alpha_p(n)=
	\sum_{\substack{m_i \ge 0 \\ 1 \le i \le n}}
	\sum_{\substack{ l_i =1   \\ 1 \le i \le n}}^{m_i +l_{i-1}}
	p^{\sum_{j=1}^n (m_j+l_j)} (1-p)^{2n+l_0 -l_n + \sum_{j=1}^n m_j}
	\left( \prod_{j=1}^n (m_j+1)\binom{m_j +l_{j-1}}{l_j} \right),
	\]
	where $l_0=1$.
	\begin{lem}
	\label{lem:LB}
	For all $n \ge 1$, we have
	\begin{equation}
	\mathbb{P}_p^{G_n}(o \leftrightarrow \pi^{-1}(n))
	\ge \alpha_p(n).
	\end{equation}
	\end{lem}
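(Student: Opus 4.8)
The plan is to produce, inside the event $\{o\leftrightarrow\pi^{-1}(n)\}$, a family of pairwise disjoint events indexed by the admissible tuples $(m_1,\dots,m_n,l_1,\dots,l_n)$ (with $l_0=1$, $m_j\ge 0$, $1\le l_j\le m_j+l_{j-1}$) together with some finite auxiliary data, such that the probabilities of these events add up to exactly $\alpha_p(n)$. It suffices to treat $0<p<1$; the cases $p\in\{0,1\}$ are immediate (for $p=1$ note $\alpha_1(n)=0$ since the exponent of $1-p$ in every summand is at least $2n$, because $\sum_j m_j\ge l_n-l_0$).

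I would run an exploration of the configuration on $G_n$ that processes $H_1,\dots,H_n$ in turn. Put $A_0=\{o\}$, so $l_0:=|A_0|=1$. Suppose after processing $H_1,\dots,H_{j-1}$ we have revealed only edges of those pieces and obtained a set $A_{j-1}\subseteq\pi^{-1}(j-1)$, each of whose vertices is joined to $o$ by revealed open edges, with $|A_{j-1}|=l_{j-1}\ge 1$. All edges of $H_j$ — the vertical edges of the line $\pi^{-1}(j-1)$ and the horizontal edges from $\pi^{-1}(j-1)$ to $\pi^{-1}(j)$ — are still unrevealed, and $H_j$ is edge-disjoint from $H_1,\dots,H_{j-1}$, so conditionally on the past they are i.i.d.\ Bernoulli$(p)$. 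Let $u,v$ be the leftmost and rightmost points of $A_{j-1}$ on the line. Reveal the vertical edges immediately left of $u$ one by one until the first closed one appears, say $a_j\ge 0$ were open; do the same to the right of $v$, say $b_j\ge 0$ open; set $m_j=a_j+b_j$. Then $B_{j-1}:=A_{j-1}\cup\{\text{the }m_j\text{ new vertices}\}$ has $l_{j-1}+m_j$ vertices, all joined to $o$. Finally reveal all $l_{j-1}+m_j$ horizontal edges leaving $B_{j-1}$; let $A_j$ be the set of upper endpoints of the open ones and $l_j=|A_j|$. If $l_j=0$ stop, otherwise proceed to $H_{j+1}$. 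Note that in round $j$ we never touch the vertical edges strictly between $u$ and $v$, and the growth regions left of $u$ and right of $v$ are disjoint from $A_{j-1}$ and from each other, so only edges of $H_j$ are revealed and none twice.

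Call a run of the exploration \emph{successful} if it survives all $n$ rounds, i.e.\ $l_j\ge 1$ for $1\le j\le n$. A successful run is determined by the tuple $(a_j,b_j)_{j=1}^n$ together with, for each $j$, the choice of which $l_j$ of the $l_{j-1}+m_j$ horizontal edges of round $j$ are open; different successful runs disagree on some revealed edge, so the events they determine are pairwise disjoint, and on a successful run $A_n$ is a nonempty subset of $\pi^{-1}(n)$ joined to $o$, so the event lies in $\{o\leftrightarrow\pi^{-1}(n)\}$. A fixed successful run with parameters $(\vec m,\vec l)$ has probability $\prod_{j=1}^n p^{a_j}(1-p)\,p^{b_j}(1-p)\,p^{l_j}(1-p)^{l_{j-1}+m_j-l_j}=p^{\sum_j(m_j+l_j)}(1-p)^{2n+l_0-l_n+\sum_j m_j}$, and for each fixed $(\vec m,\vec l)$ the number of successful runs is $\prod_{j=1}^n(m_j+1)\binom{m_j+l_{j-1}}{l_j}$, the factor $m_j+1$ counting the splittings $a_j+b_j=m_j$. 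Summing over all admissible $(\vec m,\vec l)$ then yields $\mathbb{P}_p^{G_n}(o\leftrightarrow\pi^{-1}(n))\ge\alpha_p(n)$.

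The genuinely substantive points are the book-keeping ones isolated above: that round $j$ reveals only, and never re-reveals, edges of $H_j$ — so that the moves in round $j$ are fresh independent coin flips given the past — and that distinct successful runs are forced to disagree on a revealed edge, so that their events are disjoint. I expect verifying these, together with the harmless null event (excluded since $p<1$) that an exploration of some line never meets a closed edge, to be the main thing to write out carefully; the displayed probability and counting identities are then routine and reproduce exactly the sum defining $\alpha_p(n)$.
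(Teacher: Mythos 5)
Your proof is correct and follows essentially the same route as the paper's: a level-by-level exploration that grows the cluster on each line $\pi^{-1}(j-1)$ only outward from the two extremes of the previously reached set $A_{j-1}$ (discarding the interior line edges), then crosses the rungs, with the $(a_j,b_j,l_j)$ bookkeeping reproducing the sum defining $\alpha_p(n)$ exactly. Your formulation via an explicit family of pairwise disjoint events is simply a more careful writeup of the same decomposition the paper carries out via conditional probabilities.
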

	\begin{proof}
	When $n=1$,
	$\mathbb{P}_p^{H_1}(o \leftrightarrow \pi^{-1}(1))$ can be computed exactly as follows.
	First, we consider percolation on $\pi^{-1}(0)$,
	which is isomorphic to $\mathbb{Z}$.
	Let $m$ be a nonnegative integers. Then we have
	\[
	\mathbb{P}_p^{\mathbb{Z}}(|\mathcal{C}|=m+1)=(m+1) p^m(1-p)^2.
	\]
	Second, we consider percolation on the remaining edges,
	that means on $\left\{ \{ (0,k),(1,k)\} \mid  k\in \mathbb{Z} \right\}$.
	We are now thinking on the case where the event $|\mathcal{C}|=m+1$ occurs.
	Thus, we only consider percolation on 
	$\left\{ \{ (0,k),(1,k)\} \mid  (0,k) \in \mathcal{C} \right\}$.
	If at least one of the $m+1$ edges is open,
	then $(o \leftrightarrow \pi^{-1}(1))$ occurs.
	So, we have 
	\begin{align}
	\mathbb{P}_p^{H_1}(o \leftrightarrow \pi^{-1}(1))
	&=\sum_{m \ge 0} (m+1) p^m(1-p)^2 (1-(1-p)^{m+1}) \notag \\
	\label{eq:H_1}
	&=\sum_{m \ge 0} (m+1) p^m(1-p)^2 \sum_{l=1}^{m+1} \binom{m+1}{l}p^l(1-p)^{m+1-l}.
	\end{align}
	Next, we would like to show general case.
	If $(o \leftrightarrow \pi^{-1}(1))$ occurs,
	then there exists a non-empty subset $A_1 \subset \pi^{-1}(1)$
	such that $o \leftrightarrow v$ on $H_1$ for all $v \in A_1$.
	We would like to know the probability 
	$\mathbb{P}_p^{H_2}(A_1 \leftrightarrow \pi^{-1}(2))$ with $|A_1|=l_1$.
	It depends on a configuration of $A_1$,
	but we can obtain a lower bound which does not depend on  a configuration of $A_1$.
	Since $\pi^{-1}(1)$ and $\mathbb{Z}$ are isomorphic,
	we replace $\pi^{-1}(1)$ with $\mathbb{Z}$.
	The case where $l_1=1$ is explain as above.
	So, we assume $l_1 \ge 2$. 
	Let $A_1=\{v_1, \ldots , v_{l_1}\}$ such that $v_1<v_2< \cdots <v_{l_1}$.
	First, we consider percolation on $\mathbb{Z} \setminus [v_1, v_{l_1}]$,
	and all edges of $[v_1, v_{l_1}]$ are assumed to be closed.
	We divide computation into several cases according to the size of the cluster 
	containing $A_1$.
	In other words, the cluster containing $A_1$ is
	$\mathcal{C}^{\prime}=\bigcup_{i=2}^{l-1} \{v_i\} 
	\cup \mathcal{C}(v_1) \cup \mathcal{C}(v_{l_1})$,
	where $\mathcal{C}(v_i)$ is the intersection of the cluster containing $v_i$
	and $\mathbb{Z} \setminus [v_1, v_{l_1}]$.
	Then we have
	\[
	\mathbb{P}_p^{\mathbb{Z}}
	(|\mathcal{C}^{\prime}|=m_2+l_1)=(m_2+1) p^{m_2}(1-p)^2.
	\]
	Second, we consider percolation on the remining edges.
	We are now thinking about the case where the event 
	$(|\mathcal{C}^{\prime}|=m_2+l_1)$ occurs.
	Thus, we only consider percolation on 
	$\left\{ \{ (0,k),(1,k)\} \mid  (0,k) \in \mathcal{C}^{\prime} \right\}$.
	If at least one of the $m_2+l_1$ edges is open,
	then $(A_1 \leftrightarrow \pi^{-1}(2))$ occurs.
	So, we have 
	\begin{align}
	\mathbb{P}_p^{H}(A_1 \leftrightarrow \pi^{-1}(2))
		& \ge \sum_{m_2 \ge 0} (m_2+1) p^m_2(1-p)^2 (1-(1-p)^{m_2+l_1}) \notag \\
	\label{ine:H_2}
		&= \sum_{m_2 \ge 0}  (m_2+1) p^m_2(1-p)^2
			\sum_{l_2=1}^{m_2+l_1} \binom{m_2+l_1}{l_2} 
			p^{l_2} (1-p)^{m_2+l_1-l_2}.
	\end{align}
	If $(o \leftrightarrow A_1)$ on $H_1$
	and $(A_1 \leftrightarrow \pi^{-1}(2))$ on $H_2$ occur,
	then $(o \leftrightarrow \pi^{-1}(2))$ on $G_2$ occur.
	Therefore, using \eqref{eq:H_1} and \eqref{ine:H_2}, we have
	\[
	\mathbb{P}_p^{G_2}(o \leftrightarrow \pi^{-1}(2))
	\ge \sum_{\substack{m_i \ge 0 \\ 1 \le i \le 2}}
	\sum_{\substack{ l_i =1   \\ 1 \le i \le 2}}^{m_i +l_{i-1}}
	p^{\sum_{j=1}^2 (m_j+l_j)} (1-p)^{2 \cdot 2+l_0 -l_2+\sum_{j=1}^2 m_j}
	\left( \prod_{j=1}^2 (m_j+1)\binom{m_j +l_{j-1}}{l_j} \right),
	\]
	where $l_0=1$. 
	If $(A_1 \leftrightarrow \pi^{-1}(2))$ occurs,
	then there exists a non-empty subset $A_2 \subset \pi^{-1}(2)$
	such that $A_1 \leftrightarrow v$ on $H_2$ for all $v \in A_2$.
	Repeating this process,
	If there exists non-empty subset $A_i \subset \pi^{-1}(i)$ 
	and $(A_{i-1} \leftrightarrow A_i)$ on $H_i$ occurs for $1 \le i \le n$
	where $A_0=o$,
	then $(o \leftrightarrow \pi^{-1}(n))$ on $G_n$ occurs.
	It complets the proof.
	\end{proof}
	By Lemma \ref{lem:LB}, we have
	\begin{equation}
	\alpha(p) \ge \limsup_{n \rightarrow \infty} \alpha_p(n)^{1/n}.
	\end{equation}

\section{Generating function and radius of convergence}
\label{se:GF}
	Since it is not quite easy to handle $\alpha_p(n)$, 
	we introduce another sequence of numbers 
	which is easier to handle than $\alpha_p(n)$.
	The sequence of numbers $\beta_p(n)$ is defined by
	\[
	\beta_p(n)
	=\sum_{\substack{m_i \ge 0 \\ 1 \le i \le n}}
	\sum_{\substack{ l_i =1   \\ 1 \le i \le n}}^{m_i +l_{i-1}}
	p^{\sum_{j=1}^n (m_j+l_j)} (1-p)^{2n+l_0+\sum_{j=1}^n m_j}
	\left( \prod_{j=1}^n (m_j+1)\binom{m_j +l_{j-1}}{l_j} \right),
	\]
	where $l_0=1$.
	Since $\alpha_p(n) \ge \beta_p(n)$ for all $n$,
	we have  $\displaystyle \limsup_{n \rightarrow \infty} \alpha_p(n)^{1/n} \ge 
	\limsup_{n \rightarrow \infty} \beta_p(n)^{1/n}$.
	We know $\displaystyle \limsup_{n \rightarrow \infty} \beta_p(n)^{1/n}$ 
	equals the inverse of the radius of convergence of the generating function	
	$\displaystyle F_p(z)=\sum_{l \ge 1} \beta_p(l) z^l$.
	Therefore, we focus on the function $F_p(z)$.
	Since $1 \ge \alpha(p) \ge \displaystyle \limsup_{ n \to \infty} \beta_p(n)^{1/n}$,
	we have that $F_p(z)$ is finite for all $|z|<1$.
	When $p < p_c(\mathbb{Z}^2)=1/2$, 
	we know that $\displaystyle \limsup_{n \to \infty} \alpha_p(n) =0$ holds.
	In the case of $z=1$, since
	\begin{align*}
	\alpha_p(n)
	&= \alpha_p(n-1) - \left( \frac{1-p}{1-p+p^2} \right)^2 \beta_p(n-1)\\
	&=\alpha_p(1)- \left( \frac{1-p}{1-p+p^2} \right)^2
		\sum_{k=1}^{n-1} \beta_p(k),
	\end{align*}
	we have that 
	\[
	F_p(1)=\left( \frac{1-p+p^2}{1-p} \right)^2 \alpha_p(1).
	\]
	Thus, we would like to consider whether $F_p(z)$ converges or not 
	in $|z| \ge1, z \not=1$.
	We set 
	\begin{align*}
	\Phi_p(l)&= \prod_{i=1}^l \frac{1-p}{1-p+p^{i+1}},\\
	H_p(z)&= \sum_{l \ge 1} \Phi_p(l)^2 z^l.
	\end{align*}
	It is easy to show that the radius of convergence of $H_p(z)$ equals 1.
	But we would like to consider $|z| \ge 1, z\not=1$.
	Therefore, we consider an analytic continuation of $H_p(z)$. 
	We have
	\[
	\Phi_p(l)^2 
	= \left( \frac{1-p+p^{(l+1)+1}}{1-p}\right)^2 \Phi_p(l+1)^2
		= \left( 1+ \frac{2p^2}{1-p}p^l + \frac{p^4}{(1-p)^2}p^{2l} \right)
		\Phi_p(l+1)^2 .
	\]
	Using this equation, $H_p(z)$ is deformed into the form
\begin{align*}
	H_p(z)
	&= \sum_{l \ge 1} \Phi_p(l+1)^2 z^l 
		+ \frac{2p^2}{1-p} \sum_{l \ge 1} \Phi_p(l+1)^2 (pz)^l
		+ \frac{p^4}{(1-p)^2} \sum_{l \ge 1} \Phi_p(l+1)^2 (p^2 z)^l\\
	&= \frac{1}{z}H_p(z) + \frac{2p}{1-p}\frac{1}{z}H_p(pz)
		+ \left( \frac{p}{1-p} \right)^2 \frac{1}{z}H_p(p^2 z)-1.
\end{align*}
	Then we have 
	\begin{equation}
	\label{eq:H}
	H_p(z)
	= \frac{\frac{2p}{1-p} H_p(pz) 
		+\left( \frac{p}{1-p} \right)^2 H_p(p^2 z)- z}{z-1}.
	\end{equation}
	Therefore, the right-hand side is the analytic continuation of $H_p(z)$
	defined in $|z|<1/p, z\not=1$.
	\begin{lem}
	\label{lem:pole}
	If there exists $x_0 \in (1,1/p) \subset \mathbb{R}$ such that $H_p(x_0)=-1$,
	then $x_0$ is a pole of $F_p(z)$. 
	\end{lem}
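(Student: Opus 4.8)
\emph{Proof proposal.} The plan is to exhibit $F_p$ as a meromorphic function $F_p(z)=N_p(z)/\bigl(1+H_p(z)\bigr)$ with $N_p$ holomorphic and nonvanishing at $x_0$; then $H_p(x_0)=-1$ makes $x_0$ a zero of the denominator but not of the numerator, hence a pole. To get this identity I would interpolate between $\alpha_p$ and $\beta_p$: for $r\in[0,1]$ put
\[
S_p(n,r)=\sum_{\substack{m_i\ge0\\ 1\le i\le n}}\ \sum_{\substack{l_i=1\\ 1\le i\le n}}^{m_i+l_{i-1}} r^{l_n}\,p^{\sum_j(m_j+l_j)}(1-p)^{2n+l_0-l_n+\sum_j m_j}\prod_{j=1}^n(m_j+1)\binom{m_j+l_{j-1}}{l_j},
\]
$l_0=1$, so that $S_p(n,1)=\alpha_p(n)$, $S_p(n,1-p)=\beta_p(n)$, $S_p(0,r)=r$, and $0\le S_p(n,r)\le\alpha_p(n)\le1$. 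Summing out the last layer $(m_n,l_n)$ and using the elementary identity $\sum_{b\ge1}r^b\sum_{m\ge0}(m+1)\binom{m+a}{b}p^{m+b}(1-p)^{m+2+a-b}=\frac{(1-p)^2(1-p+rp)^a}{(1-p+p^2(1-r))^2}-\frac{(1-p)^{a+2}}{(1-p+p^2)^2}$ produces the one-step recursion
\[
S_p(n,r)=\frac{(1-p)^2}{\bigl(1-p+p^2(1-r)\bigr)^2}\,S_p\bigl(n-1,\ 1-p+rp\bigr)-\Phi_p(1)^2\,\beta_p(n-1),
\]
which at $r=1$ is the relation $\alpha_p(n)=\alpha_p(n-1)-\Phi_p(1)^2\beta_p(n-1)$ already noted above.

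The gain from the interpolation is that $r\mapsto 1-p+rp$ carries $1-p^k$ to $1-p^{k+1}$, that $1-p+p^2\bigl(1-(1-p^k)\bigr)=1-p+p^{k+2}$, and that $\dfrac{(1-p)^2}{(1-p+p^{k+2})^2}=\bigl(\Phi_p(k+1)/\Phi_p(k)\bigr)^2$. Writing $G_k(z)=\sum_{n\ge1}S_p(n,1-p^k)z^n$, so $G_0\equiv0$ and $G_1=F_p$, the recursion becomes
\[
G_k(z)=\Bigl(\tfrac{\Phi_p(k+1)}{\Phi_p(k)}\Bigr)^{2}z\bigl[(1-p^{k+1})+G_{k+1}(z)\bigr]-\Phi_p(1)^2\,z\bigl[(1-p)+F_p(z)\bigr].
\]
Iterating, the transfer coefficients telescope, $\prod_{i=k}^{K-1}\bigl(\Phi_p(i+1)/\Phi_p(i)\bigr)^2=\Phi_p(K)^2/\Phi_p(k)^2$, which stays bounded as $K\to\infty$ because $\Phi_p(K)\to\Phi_p(\infty):=\prod_{i\ge1}\tfrac{1-p}{1-p+p^{i+1}}>0$; and for $|z|<1$ the remainder $z^{K-k}\Phi_p(K)^2\Phi_p(k)^{-2}G_K(z)$ vanishes as $K\to\infty$ since $|G_K(z)|\le\sum_n\alpha_p(n)|z|^n<\infty$ uniformly in $K$. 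Taking $k=1$ and $K\to\infty$ (using $S_p(n,1-p^K)\to\alpha_p(n)$), and then using $\sum_{l\ge1}z^l\Phi_p(l)^2=H_p(z)$ together with the same sum in $pz$, collapses the iterated series to
\[
F_p(z)\bigl(1+H_p(z)\bigr)=\frac{H_p(z)-H_p(pz)}{\Phi_p(1)^2\,z}-(1-p)\bigl(1+H_p(z)\bigr)=:N_p(z),\qquad |z|<1.
\]

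To conclude, \eqref{eq:H} makes $H_p$, hence $H_p(pz)$, hence $N_p$, holomorphic on $\{|z|<1/p\}\setminus\{1\}$, so $F_p=N_p/(1+H_p)$ is the meromorphic continuation of $\sum_l\beta_p(l)z^l$ there. If $x_0\in(1,1/p)$ and $H_p(x_0)=-1$, then $1+H_p$ vanishes at $x_0$, whereas substituting $H_p(x_0)=-1$ into the formula for $N_p$ gives $N_p(x_0)=-\dfrac{1+H_p(px_0)}{\Phi_p(1)^2\,x_0}$; since $px_0\in(0,1)$ is inside the disc of convergence of $H_p$ we have $H_p(px_0)=\sum_{l\ge1}\Phi_p(l)^2(px_0)^l>0$, so $N_p(x_0)\ne0$, and therefore $x_0$ is a pole of $F_p$.

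I expect the real work to be the middle paragraph: deriving the one-step recursion with exactly the right constants, and — above all — justifying the $K\to\infty$ telescoping, i.e.\ interchanging the limit both with the iterated substitution and with the series defining $G_K$; this is precisely where one needs $\Phi_p(\infty)>0$ and the uniform bound $S_p(n,\cdot)\le1$. The analytic continuation via \eqref{eq:H} and the sign check $H_p(px_0)>0$ are then routine.
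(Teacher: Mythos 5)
Your proposal is correct and follows essentially the same route as the paper: both derive a closed-form identity $(1+H_p(z))F_p(z)=N_p(z)$ by peeling off the last layer of the multiple sum defining $\beta_p(n)$ (your interpolated $S_p(n,r)$ at $r=1-p^k$ is exactly the paper's partial sums weighted by $S_p(t)^{l_i}$), and both conclude by showing $N_p(x_0)\neq0$ from the positivity of $H_p(px_0)$. The only real difference is that your numerator $N_p(z)=\frac{H_p(z)-H_p(pz)}{\Phi_p(1)^2 z}-(1-p)\bigl(1+H_p(z)\bigr)$ is the paper's right-hand side of \eqref{eq:F} with $H_p(p^2z)$ eliminated via \eqref{eq:H}, which makes the non-vanishing at $x_0$ immediate instead of requiring the paper's extra substitution step.
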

	Lemma \ref{lem:pole} means that the radius of convergence of $F_p(z)$ is 
	less than or equal to $x_0$.
	Then we have $\alpha(p) \ge 1/x_0$.
	\begin{proof}	
	First, we consider the relationship between $\beta_p$ and $\Phi_p$.
	We will show the following equations.
	\begin{align}
	\label{eq:beta1}
	\beta_p(1)
	&= \frac{p}{1-p} \Phi_p(2)^2(1-p^2(1-p)(1-p^2)),\\
	\label{eq:betan}
	\beta_p(n)
	&= \frac{p}{(1-p)^2}\Phi_p(n+1)^2 \left( 1-p^n)(1-p^2(1-p)(1-p^{n+1}) \right)
	 - \sum_{k=1}^{n-1} \Phi_p(n-k)^2 \beta_p(k).
	\end{align}
	We define $B_p(i)=p^{m_i+l_i}(1-p)^{2+m_i}(m_i+1)\binom{m_i+l_{i-1}}{l_i}$,
	then we can express $\beta_p(n)$ in terms of $B_p(i)$ as
	\[
	\beta_p(n)=(1-p)
	\sum_{\substack{m_i \ge 0 \\ 1 \le i \le n}}
	\sum_{\substack{ l_i =1   \\ 1 \le i \le n}}^{m_i +l_{i-1}}
	\left( \prod_{j=1}^n B_p(j) \right).
	\]
	We set $S_p(t)=\sum_{k=0}^t p^k$.
	Then the following equation holds.
\begin{align*}
	\sum_{m_i \ge 0} \sum_{l_i=1}^{m_i+l_{i-1}} B_p(i) S_p(t)^{l_i}
	&= (1-p)^2\sum_{m_i \ge 0} (m_i+1) p^{m_i} (1-p)^{m_i} 
		\sum_{l_i=1}^{m_i+l_{i-1}} (pS_p(t))^{l_i} \binom{m_i + l_{i-1}}{l_i}\\
	&= (1-p)^2\sum_{m_i \ge 0} (m_i+1)p^{m_i} (1-p)^{m_i} 
		\left( (1+pS_p(t))^{m_i+l_{i-1}}-1 \right)\\
	&= \left( \frac{1-p}{1-p+p^{t+3}}\right)^2 S_p(t+1)^{l_{i-1}} 
		- \left( \frac{1-p}{1-p+p^2}\right)^2.
\end{align*}
	If $i=1, t=0$, then we have 
\begin{align*}
	\beta_p(1)
	&= \left( \frac{1-p}{1-p+p^{3}} \right)^2S_p(1)^1 
		- \left( \frac{1-p}{1-p+p^2} \right)^2\\
	&= \frac{p}{1-p} \Phi_p(2)^2(1-p^2(1-p)(1-p^2)).
\end{align*}
	In the case where $n \ge 2$, we see
\begin{align*}
	\beta_p(n)
	&= (1-p)\sum_{\substack{m_i \ge 0 \\ 1 \le i \le n-1}}
		\sum_{\substack{ l_i =1   \\ 1 \le i \le n-1}}^{m_i +l_{i-1}}
		\left( \prod_{j=1}^{n-1} B_p(j) \right)
		\sum_{m_n \ge 0} \sum_{l_n=1}^{m_n+l_{n-1}} B_p(n) S_p(0)^{l_n}\\
	&=  (1-p)\sum_{\substack{m_i \ge 0 \\ 1 \le i \le n-1}}
		\sum_{\substack{ l_i =1   \\ 1 \le i \le n-1}}^{m_i +l_{i-1}}
		\left( \prod_{j=1}^{n-1} B_p(j) \right)
		\left( \left( \frac{1-p}{1-p+p^3}\right)^2 S_p(1)^{l_{n-1}} 
		- \left( \frac{1-p}{1-p+p^2}\right)^2  \right)\\
	&= (1-p)\left( \frac{1-p}{1-p+p^3}\right)^2
		\sum_{\substack{m_i \ge 0 \\ 1 \le i \le n-2}}
		\sum_{\substack{ l_i =1   \\ 1 \le i \le n-2}}^{m_i +l_{i-1}}
		\left( \prod_{j=1}^{n-2} B_p(j) \right)
		\sum_{m_{n-1} \ge 0} \sum_{l_{n-1}=1}^{m_{n-1}+l_{n-2}} B_p(n-1) S_p(1)^{l_{n-1}}\\
		&\quad - \left( \frac{1-p}{1-p+p^2}\right)^2\beta_p(n-1).
\end{align*}
	Repeating this process, we have
	\begin{align*}
	\beta_p(n)
	&= (1-p)\Phi_p(n)^2  \left( \frac{1-p+p^2}{1-p} \right)^2 
		\left( \left( \frac{1-p}{1-p+p^{(n+1)+1}}\right)^2 S_p(n)^1
		-\left( \frac{1-p}{1-p+p^2}\right)^2 \right)\\
		&\quad - \sum_{k=1}^{n-1} \Phi_p(n-k) \beta_p(k)\\
	&= \frac{p}{(1-p)^2}\Phi_p(n+1)^2 \left( 1-p^n)(1-p^2(1-p)(1-p^{n+1}) \right)
	 - \sum_{k=1}^{n-1} \Phi_p(n-k)^2 \beta_p(k).
	\end{align*}
	By the equations \eqref{eq:beta1} and \eqref{eq:betan},
	we obtain the following expression of $F_p(z)$ in terms of $\Phi_p(l)$.
	\begin{align*}
	F_p(z)
	&= \sum_{l \ge 2}  \left( \frac{p}{(1-p)^2}\Phi_p(l+1)^2(1-p^l)
		(1-p^2(1-p)(1-p^{l+1}))
		- \sum_{k=1}^{l-1} \Phi_p(l-k)^2 \tilde{\beta}_p(k) \right) z^l
		+ \beta_p(1)z\\
	&= \frac{p}{(1-p)^2}(1-p^2+p^3)\sum_{l \ge 1} \Phi_p(l+1)^2 z^l
		-\frac{p}{(1-p)^2}(1-p^2+2p^3-p^4)\sum_{l \ge 1} \Phi_p(l+1)^2 (pz)^l\\
	&\quad +\frac{p}{(1-p)^2}(p^3-p^4)\sum_{l \ge 1} \Phi_p(l+1)^2 (p^2 z)^l
		-\sum_{l \ge 2}\sum_{k=1}^{l-1} \Phi_p(l-k)^2 \beta_p(k)z^l.
	\end{align*}
	Each term in the above expression is rewritten as 
\begin{align*}
	\sum_{l \ge 1} \Phi_p(l+1)^2 z^l
	&= \frac{1}{z} \sum_{l \ge 2} \Phi_p(l)^2 z^l 
		= \frac{1}{z}(H_p(z) - \Phi(1)^2 z),\\
		\sum_{l \ge 1} \Phi_p(l+1)^2 (pz)^l
	&= \frac{1}{pz} \sum_{l \ge 2} \Phi_p(l)^2 (pz)^l 
		= \frac{1}{pz}(H_p(pz) - \Phi(1)^2 pz),
\end{align*}
\begin{align*}
		\sum_{l \ge 1} \Phi_p(l+1)^2 (p^2 z)^l
	&= \frac{1}{p^2 z} \sum_{l \ge 2} \Phi_p(l)^2 (p^2 z)^l 
		= \frac{1}{p^2 z}(H_p(p^2 z) - \Phi(1)^2 p^2 z),\\
		\sum_{l \ge 2}\sum_{k=1}^{l-1} \Phi_p(l-k)^2 \beta_p(k)z^l
	&= \sum_{k \ge 1} \beta_p(k)z^l \sum_{l-k \ge 1} \Phi_p(l-k)^2 z^{l-k}
		= F_p(z) H_p(z).
\end{align*}
	Then we obtain
	\begin{align}
	\label{eq:F}
	(1+H_p(z)) F_p(z) 
	&= \frac{p}{(1-p)^2} \left( \frac{1-p^2(1-p)}{z} H_p(z) 
		- \frac{1-p^2(1-p^2)}{pz}  H_p(pz)  
	    - \frac{p^3(1-p)}{p^2z} H_p(p^2 z) \right) .
	\end{align}
	By the equation \eqref{eq:F},
	it is enough to show the right hand side of \eqref{eq:F} is not equal to 0
	when $H_p(x_0)=-1$.
	By the equation (\ref{eq:H}), 
	we have 
	\[
	H_p(p^2x) =
	\left( \frac{1-p}{p} \right)^2 
	- 2 \frac{1-p}{p} H_p(px).
	\]
	Using this equation, if the right hand side of \eqref{eq:F} is equal to 0,
	then we have $H_p(px_0)=-1$.
	This is contrary to $H_p(px) \ge 0$ for all $x \in [0, 1/p)$.
	\end{proof}
	We set
	\[
	h_p(x) = 2H_p(px) +\frac{p}{1-p} H_p(p^2x ).
	\]
	\begin{lem}
	\label{lem:key}
	Let $p_0=\inf \left\{ p\in [0,1] \mid  \exists x \in(1,1/p) \text{ s.t. }
	 h_p(x) \ge (1-p)/p,
	x < (d-1) \right\}$. Then we have $p_c (T_d \Box \mathbb{Z}) \le p_0$.
	\end{lem}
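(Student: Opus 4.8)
The plan is to deduce $p_c(T_d\Box\mathbb{Z})\le p$ for every $p$ in the set
\[
S=\Bigl\{\,p\in[0,1]\ \Bigm|\ \exists\,x\in(1,1/p)\ \text{with}\ h_p(x)\ge\tfrac{1-p}{p}\ \text{and}\ x<d-1\,\Bigr\},
\]
and then take the infimum over $p\in S$ to get $p_c\le\inf S=p_0$. So I would fix $p\in S$ together with a witness $x^{*}\in(1,1/p)$ satisfying $h_p(x^{*})\ge(1-p)/p$ and $x^{*}<d-1$, and aim to produce a point $x_0\in(1,1/p)$ with $x_0<d-1$ at which $H_p(x_0)=-1$. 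Once this is available, Lemma~\ref{lem:pole} and the remark after it give $\alpha(p)\ge 1/x_0>1/(d-1)$; hence $p$ lies in $\{q\in[0,1]:\alpha(q)>1/(d-1)\}$, and Lemma~\ref{lem:limit} gives $p_c(T_d\Box\mathbb{Z})\le\inf\{q:\alpha(q)>1/(d-1)\}\le p$, as desired.

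To pass from the hypothesis on $h_p$ to the hypothesis of Lemma~\ref{lem:pole} I would first record the algebraic identity behind the definition of $h_p$. Multiplying \eqref{eq:H} by $z-1$ and recognising $\tfrac{2p}{1-p}H_p(pz)+\tfrac{p^{2}}{(1-p)^{2}}H_p(p^{2}z)=\tfrac{p}{1-p}h_p(z)$, one obtains, for $x\in(1,1/p)$,
\[
(x-1)\bigl(H_p(x)+1\bigr)=\tfrac{p}{1-p}\,h_p(x)-1 .
\]
Since $x-1>0$ here, this shows that $H_p(x_0)=-1$ is equivalent to $h_p(x_0)=(1-p)/p$, and that the sign of $H_p(x)+1$ equals the sign of $h_p(x)-(1-p)/p$. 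So it suffices to find $x_0\in(1,x^{*}]$ with $h_p(x_0)=(1-p)/p$; then $x_0\le x^{*}<d-1<1/p$, so $x_0\in(1,1/p)$ and Lemma~\ref{lem:pole} applies.

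For $x\in(1,1/p)$ we have $px,p^{2}x\in(0,1)$, so $H_p(px)$ and $H_p(p^{2}x)$ are values of a power series with nonnegative coefficients; hence $h_p$ is continuous and nondecreasing on $(1,1/p)$, and $h_p(x)\to+\infty$ as $x\to(1/p)^{-}$ because $H_p(px)\to\sum_{l\ge1}\Phi_p(l)^{2}=\infty$. If in addition $\lim_{x\to1^{+}}h_p(x)=2H_p(p)+\tfrac{p}{1-p}H_p(p^{2})<(1-p)/p$, then the intermediate value theorem on $[1,x^{*}]$, using $h_p(x^{*})\ge(1-p)/p$, produces the desired $x_0\in(1,x^{*}]$, and the chain in the first paragraph closes.

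I expect the main obstacle to be precisely this last inequality, $\lim_{x\to1^{+}}h_p(x)<(1-p)/p$ --- equivalently, that the right-hand side of the boxed identity has a strictly negative limit as $x\to1^{+}$. I would obtain it from the local structure of $H_p$ near $z=1$: the coefficients $\Phi_p(l)^{2}$ decrease to $\Phi_p(\infty)^{2}=\prod_{i\ge1}\bigl(\tfrac{1-p}{1-p+p^{i+1}}\bigr)^{2}\in(0,1)$, and $\Phi_p(l)^{2}-\Phi_p(\infty)^{2}$ decays geometrically at rate $p$, so $H_p(z)=\Phi_p(\infty)^{2}\tfrac{z}{1-z}+A(z)$ with $A$ analytic on $\{|z|<1/p\}$; thus $z=1$ is a simple pole of $H_p$ and $H_p(x)\to-\infty$ as $x\to1^{+}$. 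Plugging this into the boxed identity forces $\lim_{x\to1^{+}}\bigl(\tfrac{p}{1-p}h_p(x)-1\bigr)=-\Phi_p(\infty)^{2}<0$, which is exactly what is needed. (For the range of $p$ relevant to Theorems~A and~B the same conclusion follows more cheaply from $H_p(p)\le p/(1-p)$, $H_p(p^{2})\le p^{2}/(1-p^{2})$ and $\Phi_p(l)\le 1$, but the pole argument handles all $p$.) Everything else is bookkeeping with the results of Sections~\ref{se:PGF}--\ref{se:GF}.
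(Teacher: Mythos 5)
Your proof is correct and follows essentially the same route as the paper: the identity $(x-1)\bigl(H_p(x)+1\bigr)=\tfrac{p}{1-p}h_p(x)-1$ extracted from \eqref{eq:H}, monotonicity of $h_p$ plus the intermediate value theorem to locate $x_0\le x^{*}<d-1$ with $H_p(x_0)=-1$, and then Lemmas \ref{lem:pole} and \ref{lem:limit}. You are in fact more careful than the paper on one point: the paper's proof only cites $h_p(0)=0$, continuity, and monotonicity, leaving implicit the check that the root $x_0$ actually lies in $(1,1/p)$ as Lemma \ref{lem:pole} requires, whereas your computation of the residue of $H_p$ at $z=1$, giving $h_p(1)=\tfrac{1-p}{p}\bigl(1-\prod_{i\ge1}\bigl(\tfrac{1-p}{1-p+p^{i+1}}\bigr)^{2}\bigr)<\tfrac{1-p}{p}$, supplies exactly that missing step.
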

	\begin{proof}
	For any $p>p_0$,
	there exists $x \in (1,1/p)$ such that $h_p(x) \ge (1-p)/p, x < (d-1)$.
	By the equation (\ref{eq:H}), 
	$H_p(x_0)=-1$ if and only if 
	$h_p(x_0)=(1-p)/p$.
	The function $h_p(x)$ is continuous, increasing on $[0,1/p)$,
	and $h_p(0)=0$.
	Therefore, we have $\alpha(p) \ge 1/x_0 \ge 1/x >1/(d-1)$.
	Using Lemma \ref{lem:limit}, we have $p_c (T_d \Box \mathbb{Z}) \le p_0$.
	\end{proof}

\section{Proof of TheoremA}
\label{sec:thmA}
	As a candidate of the real number $x$ which appeared in Lemma\ref{lem:key},
	we consider $x=(1-p)/p$.
	If $p>1/d$, then $x<(d-1)$ holds.
	Therefore, we must only show $h_p(x) \ge (1-p)/p$.
	Now we let $d \ge 3$ and assume $1/d<p<0.34$.
	Let $x_1, \ldots, x_l >0$ be real numbers.
	By the relation between the harmonic mean and the geometric mean,
	we have 
	\begin{equation}
	\left( \prod_{k=1}^l x_k \right)^{\frac{1}{l}} 
	\ge \left( \frac{1}{l} \sum_{k=1}^l \frac{1}{x_k} \right)^{-1}.
	\end{equation}
	Using this inequality, we have
\begin{align*}
	\Phi_p(l)
	&= \prod_{k=1}^l \frac{1-p}{1-p+p^{k+1}}\\
	&\ge \left( \frac{1}{l}\sum_{k=1}^l \frac{1-p+p^{k+1}}{1-p} \right)^{-l}
	=\left( \frac{l}{l+\left(\frac{p}{1-p} \right)^2 (1-p^l)} \right)^l\\
	&\ge \left( \frac{l}{l+\left(\frac{p}{1-p} \right)^2 } \right)^l
	=\left( 1- \frac{ \left( \frac{p}{1-p}\right)^2}
		{l+ \left( \frac{p}{1-p}\right)^2} \right)^l\\
	&\ge \left( 1- \frac{ \left( \frac{p}{1-p}\right)^2}{l}\right)^l
	\ge e^{-\left( \frac{p}{1-p}\right)^2}.
\end{align*}
	From this inequality, we obtain
\begin{align*}
	h_p\left( \frac{1-p}{p} \right)
	&\ge 2H_p(1-p)
	= 2\sum_{l \ge 1} \Phi_p(l)^2 (1-p)^l \\
	&\ge 2  e^{-2 \left( \frac{p}{1-p}\right)^2} \sum_{l \ge 1} (1-p)^l\\
	&=2 e^{-2 \left( \frac{p}{1-p}\right)^2} \frac{1-p}{p}.
\end{align*}
	Since $p$ is assumed to satisfy $p < 0.34$,
	then we have 
	\[
	2 e^{-2 \left( \frac{p}{1-p}\right)^2} \ge 1.
	\]
	Hence $h_p(x_0) \ge (1-p)/p$ holds.

\section{Proof of TheoremB}
\label{sec:thmB}
	In Section\ref{sec:thmA},
	we have
	\[
	\Phi_p(l)^2 \ge e^{-2 \left( \frac{p}{1-p}\right)^2}
	\ge 1-2 \left(\frac{p}{1-p} \right)^2.
	\]
	Using this inequalty,
	we have 
	\begin{align*}
	h_p(x) &\ge 
	2 \left( 1-2\left(\frac{p}{1-p} \right)^2 \right) \sum_{l \ge 1} (px)^l
	+\frac{p}{1-p} \left( 1-2\left(\frac{p}{1-p} \right)^2 \right) \sum_{l \ge 1} (p^2x)^l\\
	&= 2 \left( 1-2\left(\frac{p}{1-p} \right)^2 \right) \frac{px}{1-px}
	+\frac{p}{1-p} \left( 1-2\left(\frac{p}{1-p} \right)^2 \right) \frac{p^2x}{1-p^2x}.
	\end{align*}
	In the case of $d=4$, let $p=0.225, x=2.999$.
	Then we have $h_p(x)\ge (1-p)/p, x<d-1$.
	Therefore, $p_c<0.225$ holds.
	Using the inequality \eqref{ineq:p_u}, 
	we have $p_u > 0.232$.
	Hence, $p_c <p_u$ holds when $d=4$.
{}

\noindent
{Mathematical Institute \\
 Tohoku University \\
 Sendai 980-8578 \\
 Japan}

\begin{thebibliography}{}
\bibitem{Aizenman}M. Aizenman, H. Kesten and C. M. Newman.
	\textit{Uniqueness of the infinite cluster and continuity of connectivity functions
			for short and long range percolation.}
	Comm. Math. Phys. 111, no.4, 505-531 (1987).
\bibitem{Grimmett}G. R. Grimmett.
	\textit{Percolation, 2nd edition.} 
	Springer-Verlag, Berlin (1999).
\bibitem{Grimmett2}G. R. Grimmett and C. M. Newman.
	\textit{Percolation in $\infty+1$ dimensions.}
	In disorder in physical systems, Oxford Sci. Publ. 167–190
	Oxford Univ. Press, New York (1990).
\bibitem{Hutchcroft}H.Hutchcroft.
	\textit{Non-uniqueness and mean-field criticality for percolation on nonunimodular
	transitive graphs.}
	arXiv preprint arXiv:1711.02590 (2017).
\bibitem{Kesten}
	H. Kesten.
	\textit{The critical probability of bond percolation on the square lattice equals
			1/2},
	 Comm. Math. Phys. 74, no. 1, 41-59 (1980). 
\bibitem{Lyons1}Russell Lyons and Yuval Peres.
	\textit{Probability on trees and networks.} 
	Cambridge University Press, New York (2016).
\end{thebibliography}
\end{document}